\newtheorem{thm}{Theorem}[section]
\newtheorem*{thm*}{Theorem}
\newtheorem{lem}[thm]{Lemma}
\newtheorem{fcon}[thm]{False Conjecture}
\theoremstyle{definition}
\newtheorem{defn}[thm]{Definition}
\theoremstyle{remark}
\newtheorem{rem}[thm]{Remark}
\numberwithin{equation}{section}
 \DeclareMathOperator{\cl}{cl}
\newcommand{\inte}{\mathop{\rm int}}
\renewcommand{\epsilon}{\varepsilon}
\renewcommand{\phi}{\varphi}
\newcommand{\Sg}{\Sigma}
\begin{document}
\title{Equipartition of several measures}
\author{R.N.~Karasev}
\thanks{This research is supported by the Dynasty Foundation, the President's of Russian Federation grant MD-352.2012.1, the Russian Foundation for Basic Research grants 10-01-00096 and 10-01-00139, the Federal Program ``Scientific and scientific-pedagogical staff of innovative Russia'' 2009--2013, and the Russian government project 11.G34.31.0053.}

\email{r\_n\_karasev@mail.ru}
\address{Dept. of Mathematics, Moscow Institute of Physics and Technology, Institutskiy per. 9, Dolgoprudny, Russia 141700}
\address{Institute for Information Transmission Problems RAS, Bolshoy Karetny per. 19, Moscow, Russia 127994}
\address{Laboratory of Discrete and Computational Geometry, Yaroslavl' State University, Sovetskaya st. 14, Yaroslavl', Russia 150000}

\subjclass[2000]{28A75,52A38,55R80}
\keywords{measure equipartition, splitting necklaces, Borsuk--Ulam theorem}

\begin{abstract}
We prove several results of the following type: any $d$ measures in $\mathbb R^d$ can be partitioned simultaneously into $k$ equal parts by a convex partition (this particular result is proved independently by Pablo~Sober\'on). Another example is: Any convex body in the plane can be partitioned into $q$ parts of equal areas and perimeters provided $q$ is a prime power.

The above results give a partial answer to several questions posed by A.~Kaneko, M.~Kano, R.~Nandakumar, N.~Ramana Rao, and I.~B\'{a}r\'{a}ny. The proofs in this paper are inspired by the generalization of the Borsuk--Ulam theorem by M.~Gromov and Y.~Memarian.

The main tolopogical tool in proving these facts is the lemma about the cohomology of configuration spaces originated in the work of V.A.~Vasil'ev.

A newer version of this paper, merged with the similar paper of A.~Hubard and B.~Aronov is \href{http://arxiv.org/abs/1306.2741}{arXiv:1306.2741}.
\end{abstract}

\maketitle

\section{Introduction}

We use the idea in Gromov's generalization of the Borsuk--Ulam theorem from~\cite{gr2003,mem2009} to prove the theorem on simultaneous partitioning of several measures into equal parts. More generally, we partition several measures into equal parts and require several continuous (in a certain sense) functions of these parts coincide.

Let us make some definitions. Consider a compact topological space $X$ with a Borel probability measure $\mu$. Let $C(X)$ denote the set of continuous functions on $X$.

\begin{defn}
A finite-dimensional linear subspace $L\subset C(X)$ is called \emph{measure separating}, if for any $f\neq g\in L$ the measure of the set 
$$
e(f, g) = \{x\in X : f(x) = g(x)\}
$$
is zero.
\end{defn}

In particular, if $X$ is a compact subset of $\mathbb R^n$ such that $X=\cl(\inte X)$ and $\mu$ is any absolutely continuous measure then any finite-dimensional space of analytic functions is measure-separating, because the sets $e(f,g)$ always have dimension $<n$ and therefore measure zero. Then for any collection of $q$ elements of a measure-separating subspace we define a partition of $X$.

\begin{defn}
Suppose $F=\{u_1, \ldots, u_q\} \subset C(X)$ is a family of functions such that $\mu (e(u_i,u_j)) = 0$ for all $i\neq j$. The sets (some of them may be empty)
$$
V_i = \{x\in X : \forall j\neq i\ u_i(x)\ge u_j(x)\}
$$
have a zero measure overlap, so they define a partition $P(F)$ of $X$. In case $u_i$ are linear functions on $\mathbb R^n$ we call $P(F)$ a \emph{generalized Voronoi partition}.
\end{defn}

Now we are ready to state the result about partitioning a measure into equal parts.

\begin{thm}
\label{gen-spl}
Suppose $L$ is a measure-separating subspace of $C(X)$ of dimension $n+1$, $\mu_1,\ldots, \mu_n$ are absolutely continuous (with respect to the original measure on $X$) probability measures on $X$. Then for any prime power $q$ there exists a $q$-element subset $F\subset L$ such that for every $i=1,\ldots,n$ the partition $P(F)$ partitions the measure $\mu_i$ into $q$ equal parts.
\end{thm}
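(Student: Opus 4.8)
The plan is to run a configuration‑space / test‑map argument in the spirit of the Gromov--Memarian approach, with the cohomological lemma on configuration spaces as the single substantial topological input. Write $q=p^k$ for the given prime power; the case $n=0$ being vacuous, assume $n\ge1$.

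\emph{Step 1: the configuration space.} Fix a linear identification $L\cong\mathbb R^{n+1}$, and parametrise candidate partitions by ordered $q$-tuples $F=(u_1,\dots,u_q)$ of pairwise distinct elements of $L$. For such $F$ one has $\mu(e(u_i,u_j))=0$ because $L$ is measure-separating, hence $\mu_\ell(e(u_i,u_j))=0$ as well since $\mu_\ell\ll\mu$; thus $P(F)=\{V_1,\dots,V_q\}$ is well defined and $\sum_{j}\mu_\ell(V_j)=1$ for every $\ell$. The set $Z$ of all such $F$ is exactly the configuration space of $q$ ordered distinct points of $\mathbb R^{n+1}$. The symmetric group $S_q$ acts freely on $Z$ by permuting entries; since $q=p^k$ it contains a subgroup $G\cong(\mathbb Z/p)^k$ acting on $\{1,\dots,q\}$ through the regular representation, and $G$ then acts freely on $Z$.

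\emph{Step 2: the test map.} Let $W=\{x\in\mathbb R^q:\sum_j x_j=0\}$, a $(q-1)$-dimensional $G$-representation under coordinate permutation, and consider $W^{\oplus n}$ with the diagonal action. Define
\[
\Phi\colon Z\longrightarrow W^{\oplus n},\qquad \Phi(F)=\Bigl(\bigl(\mu_\ell(V_1)-\tfrac1q,\dots,\mu_\ell(V_q)-\tfrac1q\bigr)\Bigr)_{\ell=1}^{n},
\]
which is legitimate because each block has coordinate sum $1-1=0$; a zero of $\Phi$ is precisely an $F$ for which $P(F)$ splits every $\mu_\ell$ into $q$ equal parts, so it suffices to produce a zero of $\Phi$. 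The map $\Phi$ is $G$-equivariant, since permuting the $u_j$ permutes the cells $V_j$ the same way. It is also continuous on $Z$: if $F^{(m)}\to F$ in $Z$, the component functions converge uniformly on the compact space $X$, so $V_j(F^{(m)})\,\triangle\,V_j(F)\subseteq\bigcup_{l\ne j}\{x:|u_j(x)-u_l(x)|<\varepsilon_m\}$ with $\varepsilon_m\to0$; the right-hand side has $\mu$-measure tending to $\mu\bigl(\bigcup_{l\ne j}e(u_j,u_l)\bigr)=0$ by continuity of $\mu$ from above, hence $\mu_\ell$-measure tending to $0$ by absolute continuity, so $\mu_\ell(V_j(F^{(m)}))\to\mu_\ell(V_j(F))$.

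\emph{Step 3: the topology.} Suppose, towards a contradiction, that $\Phi$ is nowhere zero. Then $F\mapsto\Phi(F)/|\Phi(F)|$ is a $G$-equivariant map from $Z$, that is, from the configuration space of $q$ points in $\mathbb R^{n+1}$ carrying the regular $G$-action, to the sphere $S(W^{\oplus n})$ of dimension $n(q-1)-1$. This contradicts the cohomological lemma on configuration spaces: for a prime power $q=p^k$ the $G$-equivariant Euler class of the bundle over $EG\times_G Z$ associated with $W^{\oplus n}$ is nonzero in $H^{n(q-1)}(EG\times_G Z;\mathbb Z/p)$, so that bundle admits no nowhere-vanishing section and $Z$ admits no $G$-equivariant map into $S(W^{\oplus n})$. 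Therefore $\Phi$ has a zero $F_0\in Z$; its entries are distinct, so $P(F_0)$ is the sought $q$-part simultaneous equipartition of $\mu_1,\dots,\mu_n$.

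\emph{Main obstacle.} The crux is the last step --- the non-vanishing of the $G$-equivariant Euler class of $W^{\oplus n}$ over the configuration space of $q$ points in $\mathbb R^{n+1}$ --- which is precisely the Vasiliev-type lemma advertised in the abstract, and precisely where the prime-power hypothesis enters: both to produce the freely acting elementary abelian group $G$ and to make the mod-$p$ Euler-class computation work. Everything else is bookkeeping with dimensions together with the routine but mildly delicate verification that $\Phi$ depends continuously on $F$ in the measure sense.
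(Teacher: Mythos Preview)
Your approach is essentially the paper's: parametrise by the configuration space $F_q(L)\cong F_q(\mathbb R^{n+1})$, build the $\Sigma_q$-equivariant test map to $\alpha_q^{\oplus n}$ (your $W^{\oplus n}$) from the measures, verify continuity from absolute continuity plus measure-separation, and invoke an Euler-class nonvanishing lemma to force a zero. Your write-up of the continuity step is in fact more detailed than the paper's.

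The one substantive discrepancy is the equivariance group. The paper works with the full symmetric group $\Sigma_q$, and its key Lemma is stated for $\Sigma_q$-equivariant cohomology with coefficients twisted by the sign; the prime-power hypothesis enters through the divisibility of $\binom{q}{k}$ by $p$ in a cellular (Fuks) calculation. You instead restrict to the elementary abelian subgroup $G\cong(\mathbb Z/p)^k$ acting via the regular representation and assert the analogous nonvanishing in $G$-equivariant cohomology over $\mathbb F_p$. For $k=1$ (prime $q$) this $\mathbb Z/p$-equivariant version is classical and the paper explicitly notes it; for $k\ge 2$ the paper neither proves nor cites an elementary-abelian version, and the standard routes for prime powers go through $\Sigma_q$ or its $p$-Sylow subgroup (an iterated wreath product), not through $(\mathbb Z/p)^k$. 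So the lemma you black-box is not quite the one the paper supplies. The fix is cost-free: simply keep $\Sigma_q$ throughout, since your test map is already $\Sigma_q$-equivariant and $\Sigma_q$ already acts freely on $Z$, so there was no need to pass to a subgroup in the first place.
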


The ``ham sandwich'' theorem~\cite{st1942,ste1945} follows from this theorem, is we let $X=\mathbb R^n$, $L$ be the space of polynomials of degree $\le 1$, and $q=2$. Moreover, taking $q>2$ in this theorem we obtain partitions of $\mathbb R^n$ into $q$ convex parts, partitioning every measure $\mu_1, \ldots, \mu_n$ into $q$ equal parts. This is true for prime powers $q$, and for arbitrary $q$ it can be obtained by iterating partitions, though the partitions will no more be generalized Voronoi partitions. Such results about convex partitions of the plane were known, see~\cite{kbs2000} for example.

Some results similar to Theorem~\ref{gen-spl} were independently obtained in~\cite{arhu2010, sob2010}, in those papers $q$ was prime and the functions were linear (so the partition was convex). After the discussions between the author and the authors of~\cite{arhu2010} the second version of~\cite{arhu2010} was updated to include the prime power case. The reader may also find in the second version of~\cite{arhu2010} an analogue of Theorems~\ref{func-spl} and \ref{vol-area-spl} along with a detailed and rigorous discussion of all the continuity issues in those theorems and a detailed proof of Lemma~\ref{eu-nz}, which is the main tool to obtain all these results. In the other paper~\cite{sob2010} the proof uses much simpler topology (without any analogue of Lemma~\ref{eu-nz}), which is still enough to proof Theorem~\ref{gen-spl} for any number of parts $q$.

Of course, in order to apply Theorem~\ref{gen-spl} to measures in $\mathbb R^d$ we have to first suppose that the measures have compact support; for arbitrary probability measures we may use approximation and carefully go to the limit. The partition into convex parts is still possible, but possibly it will not be described as $P(F)$ for some system of linear functions $F$, that is it may not be a generalized Voronoi partition.

Let us give more results about generalized Voronoi partitions.

\begin{thm}
\label{func-spl}
Suppose $C\subset\mathbb R^n$ is a convex body, $\mu$ is an absolutely continuous probability measure on  $C$, $\phi_1, \ldots, \phi_{n-1}$ are functions of convex compact sets continuous with respect to the Hausdorff metric, and $q$ is a prime power. Then $C$ can be partitioned into $q$ convex parts $V_1,\ldots, V_q$ so that 
$$
\mu(V_1) = \dots = \mu(V_q),
$$
and for every $i = 1,\ldots, n-1$
$$
\phi_i(V_1) = \dots = \phi_i(V_q).
$$
\end{thm}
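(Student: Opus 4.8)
The plan is to follow the proof scheme of Theorem~\ref{gen-spl}, with $n-1$ of the $n$ measures replaced by the functionals $\phi_i$; the topological input will again be Lemma~\ref{eu-nz}, and the only genuinely new ingredient will be a continuity argument.

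First I would parametrize generalized Voronoi partitions. Let $L$ be the $(n+1)$-dimensional space of affine functions on $\mathbb R^n$ and let $Y=\mathrm{Conf}_q(L)$ be the space of $q$-tuples $F=(u_1,\dots,u_q)$ of pairwise distinct elements of $L$; since the partition $P(F)$ depends only on the differences $u_i-u_j$, we may pass to the quotient of $Y$ by the diagonal copy of $L$, a space of dimension $(n+1)(q-1)$ which I still call $Y$. Writing $q=p^k$, let $G=(\mathbb Z/p)^k$ act on $\{1,\dots,q\}\cong G$ by left translation, and hence on $Y$ by permuting the components of $F$; a short computation (using $q\ne 0$ in characteristic $0$ to kill the would-be translation term) shows that the only element of $S_q$ fixing a point of $Y$ is the identity, so in particular $G$ acts freely.

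Next I would set up the test map. Let $W=\{x\in\mathbb R^q:x_1+\dots+x_q=0\}$ be the reduced permutation representation of $G$, and for $F\in Y$ let $V_1,\dots,V_q$ be the pieces of $C$ in the partition $P(F)$. Define a $G$-equivariant map $\Phi=(\Phi_0,\dots,\Phi_{n-1})\colon Y\to W^{\oplus n}$ by
$$
\Phi_0(F)=\Bigl(\mu(V_1)-\tfrac1q,\ \dots,\ \mu(V_q)-\tfrac1q\Bigr),\qquad
\Phi_i(F)=\Bigl(\phi_i(V_1)-\overline{\phi_i},\ \dots,\ \phi_i(V_q)-\overline{\phi_i}\Bigr),
$$
where $\overline{\phi_i}=\frac1q\sum_j\phi_i(V_j)$ (both vectors have coordinate sum $0$, so lie in $W$). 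A zero of $\Phi$ is exactly a partition as required: $\Phi_0(F)=0$ together with $\sum_j\mu(V_j)=\mu(C)=1$ — valid since the pairwise overlaps of the $V_j$ lie in hyperplanes and $\mu$ is absolutely continuous — forces $\mu(V_j)\equiv 1/q$, and $\Phi_i(F)=0$ gives $\phi_i(V_1)=\dots=\phi_i(V_q)$.

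The hard part will be continuity of $\Phi$. The map $F\mapsto(\mu(V_1),\dots,\mu(V_q))$ is continuous on all of $Y$, because each $V_j$ is cut out of $C$ by $q-1$ half-spaces depending continuously on $F$ and, $\mu$ being absolutely continuous, boundary contributions are negligible. For $\Phi_i$ one must control $\phi_i(V_j)$ as a piece $V_j$ degenerates to a lower-dimensional convex set or to $\emptyset$; this forces one either to extend each $\phi_i$ continuously to a suitable compactification of the space of compact convex subsets of $C$ (including a formal empty piece), or to work over the open set of configurations with all pieces nondegenerate and then pass to a limit. This is precisely the bookkeeping treated carefully in~\cite{arhu2010}. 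One should note that at any zero of $\Phi$ every $V_j$ has $\mu$-measure $1/q>0$ and is hence full-dimensional, so these degenerate configurations do not actually occur at the solution. The topological conclusion is then immediate from Lemma~\ref{eu-nz}: every $G$-equivariant map $Y\to W^{\oplus n}$ has a zero, since the Euler class of the bundle $Y\times_G W^{\oplus n}\to Y/G$ is nonzero (the dimensions are compatible, as $\dim Y-n\dim W=(n+1)(q-1)-n(q-1)=q-1\ge 1$), so the section determined by $\Phi$ must vanish; this is the step where the Vasil'ev-type description of $H^*(Y/G)$ is essential, because for $k>1$ the group $G$ does not act freely on the unit sphere of $W^{\oplus n}$ and a naive Borsuk--Ulam argument is unavailable. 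A zero $F^{*}$ of $\Phi$ yields the desired pieces of $C$, each an intersection of $C$ with half-spaces and hence convex, and the theorem follows.
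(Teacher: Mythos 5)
Your overall strategy is the paper's: parametrize generalized Voronoi partitions by configurations of $q$ affine functions, build a test map to $\alpha_q^{n}$ whose first coordinate records $\mu$ and whose remaining coordinates record the deviations of the $\phi_i$ from their means, and invoke the nonvanishing Euler class of Lemma~\ref{eu-nz}. But the one step where this theorem genuinely differs from Theorem~\ref{gen-spl} --- the continuity of the $\phi_i$-components when pieces degenerate or vanish --- is exactly the step you leave open. You correctly flag it as ``the hard part,'' but then only gesture at two possible fixes (compactifying the space of convex bodies, or working over the open locus of nondegenerate configurations and ``passing to a limit'') without carrying either out, and the observation that degenerate configurations ``do not actually occur at the solution'' is not by itself a proof: the Euler-class obstruction argument needs a section that is defined and continuous on the \emph{whole} configuration space, so you cannot apply it to a map that is discontinuous elsewhere and then argue after the fact that the discontinuities were harmless.

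The paper's resolution is short and you should supply it. Let $Z\subseteq F_q(L)$ be the closed set where the measure-component $f_n$ vanishes. On $Z$ all pieces $V_j(F)$ have $\mu$-measure $\mu(C)/q>0$, hence are convex bodies with nonempty interior whose facets, and therefore whose Hausdorff limits, vary continuously with $F$; so $f_1,\dots,f_{n-1}$ are well defined and continuous on $Z$. Since $Z$ is closed and the target $\alpha_q$ is Euclidean, each $f_i$ extends to a continuous $\Sg_q$-equivariant map on all of $F_q(L)$ (equivariant Tietze extension). Lemma~\ref{eu-nz} then produces a zero of $f_1\oplus\dots\oplus f_n$, and the condition $f_n(F)=0$ forces $F\in Z$, where the extensions agree with the original maps. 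A secondary remark: your reduction of the symmetry group to $G=(\mathbb Z/p)^k$ is both unnecessary and not directly licensed by Lemma~\ref{eu-nz}, which asserts nonvanishing in $\Sg_q$-equivariant cohomology; restricting to a subgroup could in principle kill the class, so one should simply keep the full $\Sg_q$-equivariance (which your test map has anyway).
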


In particular, we can take $\phi_i(K)$ to be $i$-th Steiner measure, i.e. the coefficient at $t^i$ in the polynomial ($B$ is the unit ball here)
$$
P_K(t) = \mu(K+tB).
$$

As a particular case, we obtain the following fact: any compact convex set $C\subset \mathbb R^2$ can be partitioned into $q$ convex parts with equal areas and perimeters, provided $q$ is a prime power. Such results were conjectured and proved in particular cases $q=3, 4$ in~\cite{nara2008,barblsz2010}, the smallest remaining open case in this question is therefore $q=6$.

A similar theorem holds for the standard $n$-dimensional sphere and its convex subsets:

\begin{thm}
\label{func-spl-sph}
Suppose $C\subset S^n$ is a convex body, $\mu$ is an absolutely continuous probability measure on $C$, $\phi_1, \ldots, \phi_{n-1}$ are functions of convex compact subsets of $S^n$ continuous with respect to the Hausdorff metric, and $q>1$ is a prime power. Then $C$ can be partitioned into $q$ convex parts $V_1,\ldots, V_q$ so that 
$$
\mu(V_1) = \dots = \mu(V_q),
$$
and for every $i = 1,\ldots, n-1$
$$
\phi_i(V_1) = \dots = \phi_i(V_q).
$$
\end{thm}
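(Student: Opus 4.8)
The topological skeleton of the proof coincides with that of Theorem~\ref{func-spl}; the plan is to isolate the few places where spherical convexity enters and to check that nothing else changes. Take $X=C\subseteq S^n$ with the measure $\mu$, and let $L\subset C(X)$ be the space of restrictions to $C$ of linear functionals on $\mathbb R^{n+1}$. Because $C$ is a body, its interior is open in $S^n$ and therefore not contained in any hyperplane through the origin, so no nonzero functional vanishes on $C$ and $\dim L=n+1$. For distinct $f,g\in L$ the coincidence set $e(f,g)$ is the intersection of $C$ with a hyperplane through the origin of $\mathbb R^{n+1}$, i.e. with a great subsphere, of dimension at most $n-1$ and hence $\mu$-null; so $L$ is measure-separating. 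Finally, for a $q$-element subset $F=\{u_1,\dots,u_q\}\subset L$ with $u_i(x)=\langle x,z_i\rangle$, the cell $V_i=\{x\in C:\langle x,z_i-z_j\rangle\ge 0\ \text{for all }j\}$ is the intersection of $C$ with finitely many linear half-spaces, hence spherically convex; the $V_i$ constitute the partition $P(F)$.

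With this data the argument of Theorem~\ref{func-spl} transfers verbatim. The partition $P(F)$ depends on $(z_1,\dots,z_q)\in(\mathbb R^{n+1})^q$ only through the signs of the linear functions $\langle x,z_i-z_j\rangle$ on $C$, hence only through the class of $(z_1,\dots,z_q)$ modulo adding a common vector to all the $z_i$ and modulo multiplying them by a common positive scalar; this parameter space is the sphere $S^{(n+1)(q-1)-1}$ equipped with the coordinate-permutation action of the relevant $p$-subgroup of $S_q$ — precisely the configuration space used in the proof of Theorem~\ref{func-spl}. One then builds the same equivariant test map, whose coordinates record the deviations $\mu(V_i)-\mu(V_j)$ and $\phi_\ell(V_i)-\phi_\ell(V_j)$, and Lemma~\ref{eu-nz} forces it to vanish at some configuration; at such a configuration $P(F)$ is the desired partition.

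When $C$ lies in an open hemisphere $H\subset S^n$ — which happens for every convex body $C$ other than a closed hemisphere or $S^n$ itself — there is a shorter route that bypasses the topology: reduce directly to Theorem~\ref{func-spl} via the central (gnomonic) projection $\pi\colon H\to\mathbb R^n$ from the center of the sphere. It is a diffeomorphism that carries great-circle arcs to line segments, hence spherically convex subsets of $H$ to convex subsets of $\mathbb R^n$ and back. Thus $C'=\pi(C)$ is a convex body in $\mathbb R^n$, $\mu'=\pi_*\mu$ is an absolutely continuous probability measure on $C'$, and $\phi_i'(K'):=\phi_i(\pi^{-1}(K'))$ defines functions of convex compact subsets of $\mathbb R^n$ that are continuous in the Hausdorff metric, since a Hausdorff-convergent sequence of sets eventually lies in a fixed ball on which $\pi^{-1}$ is Lipschitz. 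Applying Theorem~\ref{func-spl} to $C',\mu',\phi_1',\dots,\phi_{n-1}'$ and $q$ yields a convex partition $C'=V_1'\cup\dots\cup V_q'$ equalizing $\mu'$ and every $\phi_i'$, and its preimage $V_j=\pi^{-1}(V_j')$ is the required partition of $C$ (each $V_j$ being spherically convex because $V_j'$ is convex).

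I expect the only genuine obstacle to be the one already flagged in the paper for the Euclidean statements, and it is analytic rather than topological: the test map of the second paragraph must be defined, and extend continuously, across the degenerate configurations where some cell $V_i$ collapses to a subset of dimension $<n$, using only the Hausdorff-continuity of the $\phi_i$ and the absolute continuity of $\mu$. On $S^n$ this is verbatim the situation treated in the proof of Theorem~\ref{func-spl}, so nothing new arises; the entire new content of Theorem~\ref{func-spl-sph} is the elementary bookkeeping of the first paragraph — the correct function space $L$, the dimension count, and the spherical convexity of the cells — together with the projection reduction of the third.
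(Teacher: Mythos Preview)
Your main argument coincides with the paper's: Theorems~\ref{func-spl} and~\ref{func-spl-sph} are proved together there, with $L$ taken in the spherical case to be the restrictions to $S^n$ of homogeneous linear functions on $\mathbb R^{n+1}$, and the identical test-map/Euler-class machinery applied. Your first and fourth paragraphs spell out correctly the spherical bookkeeping (the dimension count for $L$, measure separation, spherical convexity of the cells, and the handling of continuity by restricting to the zero set $Z$ of $f_n$ and then extending equivariantly) that the paper leaves tacit. One slip to fix: the configuration space in the paper is $F_q(L)\cong F_q(\mathbb R^{n+1})$ with the full $\Sigma_q$-action, not the sphere $S^{(n+1)(q-1)-1}$ with a $p$-group action. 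After modding out diagonal translation and positive scaling you still have to delete the loci $z_i=z_j$, so you land in an open subset of that sphere, $\Sigma_q$-homotopy equivalent to $F_q(\mathbb R^{n+1})$; Lemma~\ref{eu-nz} is stated for the latter space, and that is what the paper invokes. This is a misdescription rather than a gap --- replace ``sphere'' by $F_q(L)$ and your second paragraph is the paper's proof.

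Your gnomonic-projection reduction in the third paragraph is not in the paper and is a genuinely different, more elementary route: when $C$ lies in an open hemisphere it deduces the spherical statement directly from the Euclidean Theorem~\ref{func-spl} without re-running any equivariant topology. The trade-off is coverage --- you still need the configuration-space argument for the borderline cases (a closed hemisphere, or $S^n$ itself, if one admits these as convex bodies) --- whereas the paper's uniform choice of $L$ handles all cases in one stroke.
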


The next theorem does not follow directly from Theorem~\ref{func-spl} because of some discontinuity issues, but is proved in a similar manner. This is a higher-dimensional generalization of the results about perfect partitions in the plane, see~\cite{akknrtu2004}.

\begin{thm}
\label{vol-area-spl} 
Suppose $C\subset\mathbb R^n$ is a convex body, and for some $1\le k \le n$ we have $k$ absolutely continuous probability measures $\mu_1,\ldots, \mu_k$ on $C$, and $n-k$ absolutely continuous probability measures $\sigma_1,\ldots, \sigma_{n-k}$ on $\partial C$. Then for any $q\ge 1$ the body $C$ can be partitioned into $q$ convex parts $V_1,\ldots, V_q$ so that for any $i=1,\ldots, k$
$$
\mu_i(V_1) = \dots = \mu_i(V_q),
$$
and for every $i = 1,\ldots, n-k$
$$
\sigma_i(V_1\cap\partial C) = \dots = \sigma_i(V_q\cap\partial C).
$$
\end{thm}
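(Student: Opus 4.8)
The plan is to prove the theorem first for a prime power $q=p^m$ by an equivariant-topological argument of exactly the same flavour as the proofs of Theorems~\ref{gen-spl} and~\ref{func-spl}, and then to deduce the general case by iterating partitions. I will use generalized Voronoi (power-diagram) partitions throughout, and the nonvanishing of a certain Euler class supplied by Lemma~\ref{eu-nz}.

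For $q=p^m$, I would parametrize the relevant partitions of $C$ by $q$-tuples of distinct affine functions $u_i(x)=\langle a_i,x\rangle+b_i$, the $i$-th cell being $V_i=\{x\in C:\ u_i(x)\ge u_j(x)\ \forall j\}$; since adding a common constant to the $b_i$ does not change the partition, and the cells are genuinely distinct precisely when the points $(a_i,b_i)$ are distinct, the natural configuration space is $X=\mathrm{Conf}(\mathbb R^{n+1},q)$, the space of ordered $q$-tuples of distinct points of $\mathbb R^{n+1}$. The group $G=(\mathbb Z/p)^m$ acts on $X$ freely by permuting the $q$ points through its regular embedding $G\hookrightarrow S_q$ (every nontrivial element of $G$ acts on $\{1,\dots,q\}$ without fixed points). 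I then form the $G$-equivariant test map
$$
f\colon X\longrightarrow W_q^{\oplus n},\qquad W_q=\Bigl\{y\in\mathbb R^q:\ \textstyle\sum_j y_j=0\Bigr\},
$$
whose first $k$ blocks are $\bigl(\mu_i(V_j)-\tfrac1q\bigr)_{j=1}^q$ and whose last $n-k$ blocks are $\bigl(\sigma_i(V_j\cap\partial C)-\tfrac1q\bigr)_{j=1}^q$, where $G$ acts on $W_q$ by permutation. Each block lies in $W_q$ because the cells tile $C$ (resp. their traces tile $\partial C$) up to a null set, and a zero of $f$ is exactly a partition satisfying the conclusion; note that at a zero every $\mu_i(V_j)=1/q>0$, so, the $\mu_i$ being absolutely continuous (here I use $k\ge1$), each $V_j$ has nonempty interior and is an honest convex body. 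Interpreting $f$ as a section of the bundle $X\times_G W_q^{\oplus n}\to X/G$, a zero is guaranteed once the $G$-Euler class of $W_q^{\oplus n}$ over $\mathrm{Conf}(\mathbb R^{n+1},q)$ is nonzero in degree $n(q-1)$; this is precisely what Lemma~\ref{eu-nz} gives, since for $\mathrm{Conf}(\mathbb R^{d},p^m)$ it yields nonvanishing of the $(\mathbb Z/p)^m$-Euler class of $W_{p^m}^{\oplus(d-1)}$, and $d=n+1$ produces exactly $W_q^{\oplus n}$.

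The step I expect to be the main obstacle — and the reason this does not follow formally from Theorem~\ref{func-spl} — is the continuity of $f$. The maps $\mathrm{config}\mapsto\mu_i(V_j)$ are continuous on all of $X$ (measures of power cells vary continuously, with empty cells contributing $0$), but $\mathrm{config}\mapsto\sigma_i(V_j\cap\partial C)$ can jump when a facet of a cell sweeps across a flat face of $\partial C$, or when a cell degenerates. I would handle this as in Theorem~\ref{func-spl}: first establish the statement when $C$ is strictly convex and the $\sigma_i$ have continuous densities, in which case $\sigma_i(\partial V_j\cap\partial C)=0$ for every configuration with distinct sites and $f$ is genuinely continuous; then pass to a general convex body $C$ and general absolutely continuous $\sigma_i$ by approximation, using the compactness of the space of partitions of $\mathbb R^n$ into at most $q$ convex pieces to extract a convergent sequence of solutions and checking that the limit partition equalizes the limit measures. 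Making these continuity and limiting arguments rigorous is the technical heart; the equivariant part above is routine once Lemma~\ref{eu-nz} is available.

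Finally, for an arbitrary $q\ge1$ I would argue by strong induction: if $q$ is a prime power it is already done, and otherwise write $q=q_1q_2$ with $1<q_1<q$ and $q_1$ a prime power. Applying the prime power case to all the $\mu_i$ and $\sigma_i$ simultaneously, partition $C$ into $q_1$ convex pieces $V_1,\dots,V_{q_1}$ with $\mu_i(V_\ell)=1/q_1$ and $\sigma_i(V_\ell\cap\partial C)=1/q_1$ for all $i,\ell$. Each $V_\ell$ is again a convex body carrying the $k$ absolutely continuous measures $\mu_i|_{V_\ell}$ (of positive mass $1/q_1$) and, on $\partial V_\ell$, the $n-k$ measures $\sigma_i|_{V_\ell\cap\partial C}$ extended by zero, which are absolutely continuous with respect to the surface measure of $\partial V_\ell$ and of positive mass; since $q_2<q$, the induction hypothesis partitions each $V_\ell$ into $q_2$ convex parts equalizing these, and assembling the $q_1$ sub-partitions gives the desired partition of $C$ into $q=q_1q_2$ convex pieces.
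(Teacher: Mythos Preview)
Your overall architecture matches the paper's: parametrize by $F_q(L)$ with $L$ the affine functions on $\mathbb R^n$, build a $\Sigma_q$-equivariant test map into $\alpha_q^{\oplus n}$, invoke Lemma~\ref{eu-nz} for prime powers, and for arbitrary $q$ iterate over prime-power factors. The one substantive divergence is in how you handle the discontinuity of the $\sigma$-blocks, and here the paper's route is shorter than your approximation scheme.

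The paper never passes to strictly convex bodies or to limits. It defines the first $k$ maps $f_1,\dots,f_k$ (coming from the $\mu_i$) globally on $F_q(L)$, sets $Z=\{f_1=\dots=f_k=0\}$, and observes that for $F\in Z$ every cell $V_j(F)$ has nonempty interior; hence any hyperplane $\{u_j=u_l\}$ separating two adjacent cells meets $\inte C$ and is therefore transversal to $\partial C$. This makes the traces $V_j\cap\partial C$ depend continuously on $F$ in a neighbourhood of $Z$, so the $\sigma$-based maps $f_{k+1},\dots,f_n$ are well defined and continuous there. One then extends them $\Sigma_q$-equivariantly and continuously to all of $F_q(L)$ (the target is a linear representation, so this is trivial) and applies the Euler-class argument to $f_1\oplus\dots\oplus f_n$. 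Any common zero automatically lies in $Z$, where the extended maps agree with the genuine ones. This ``define on $Z$, extend, then observe that the zero lands back in $Z$'' trick replaces your entire approximation-and-limit step and works directly for an arbitrary convex body $C$; your route is not wrong, but it is longer and carries its own limiting subtleties to verify.

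One minor remark: your test map is already $\Sigma_q$-equivariant, so there is no need to restrict to $(\mathbb Z/p)^m$, and Lemma~\ref{eu-nz} as stated in the paper is the $\Sigma_q$-equivariant assertion (with twisted coefficients), not the $(\mathbb Z/p)^m$-equivariant one you attribute to it.
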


It would be interesting to generalize the above theorems in the following direction. Let us prescribe positive reals $\alpha_1,\ldots, \alpha_q$ with $\alpha_1+\dots+\alpha_q = 1$, several probability measures $\mu_1, \ldots, \mu_k$ and try to find a convex partition $V_1, \ldots, V_q$ of $\mathbb R^n$ so that for every $1\le i\le k$ and $1\le j\le q$
$$
\mu_i(V_j) = \alpha_j.
$$
In~\cite{aah1998} (and reproved in~\cite{zivvre2001,kar2005}) such a result was established for $k=1$ and any $q$. The partition had the form $P(F)$, where $F$ is a set of linear functions with prescribed and distinct degree $1$ homogeneous parts (and variable free terms). In~\cite{kaka2002} a similar result was established for two measures in $\mathbb R^2$ of special kind, the first being the standard area in a convex body $K$ and the second being the length measure on $\partial K$. It seems that for $d\ge 3$ and $k\ge 2$ a convex partition is not sufficient and it makes sense to consider non-convex partitions. In~\cite{stwo1985,alon1987} it was shown that in the one-dimensional case it is enough to consider partitions into unions of segments with complexity bounded by $q$ and $k$ (approximately by the product of $q$ and $k$), see also Section~\ref{one-dim-sec}. 

Using the same technique we slightly generalize the Borsuk--Ulam type theorem of Gromov and Memarian~\cite[Theorem~3]{mem2009}. The difference is that $q$ is not required to be a power of two, but can be any prime power, and we partition several measures into equal parts at the same time. 

First, we have to define a general notion of a center function.

\begin{defn}
Let $L\subset C(X)$ be a finite-dimensional linear subspace of functions. Suppose that for any subset $F\subset L$ such that all sets $\{V_1,\ldots,V_q\} = P(F)$ have nonempty interiors we can assign \emph{centers} $c(V_1),\ldots, c(V_q)\in X$ to the sets. If this assignment is continuous w.r.t. $F$ and equivariant (with respect to the permutations of functions in $F$ and permutations of points in the sequence $c_1, \ldots, c_q$), we call $c(\cdot)$ a \emph{$q$-admissible center function} for $L$.
\end{defn}

\begin{thm}
\label{gromov-gen}
Suppose $L$ is a measure-separating subspace of $C(X)$ of dimension $n+1$, $\mu_1,\ldots, \mu_{n-k}$ $(n>k)$ are absolutely continuous (with respect to the original measure on $X$) probability measures on $X$, $c(\cdot)$ is a $q$-admissible center function for some prime power $q$, and 
$$
h: X\to \mathbb R^k
$$
is a continuous map. Then there exists a $q$-element subset $F\subset L$ such that for every $i=1,\ldots,n-k$ the partition $P(F)$ partitions the measure $\mu_i$ into $q$ equal parts, and we have 
$$
h(c(V_1)) = h(c(V_2)) =\dots = h(c(V_q))
$$
for $\{V_1, \ldots, V_q\} = P(F)$.
\end{thm}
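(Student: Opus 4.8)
The plan is to run the standard configuration-space / test-map scheme, with Lemma~\ref{eu-nz} supplying the topology. Write $q=p^r$ with $p$ prime and let $G=(\mathbb{Z}/p)^r$ act on $\{1,\dots,q\}$ by the (free, transitive) regular representation. Fix a basis of $L$ to identify it with $\mathbb{R}^{n+1}$, and let the configuration space $M$ be the space of ordered $q$-tuples $(f_1,\dots,f_q)$ of \emph{pairwise distinct} elements of $L$, with $G\subset S_q$ permuting the entries; since the $f_j$ are distinct, $G$ acts freely on $M$. To a point of $M$ attach the partition $P(F)$, $F=\{f_1,\dots,f_q\}$, with cells $V_j=\{x\in X:f_j(x)\ge f_l(x)\ \forall l\}$. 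Because $L$ is measure-separating, $V_j\cap V_l\subset e(f_j,f_l)$ is null for $j\ne l$, so the $V_j$ partition $X$ up to measure zero and each $\mu_i(V_j)$ depends continuously on the configuration.

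I would then construct a $G$-equivariant test map $\Phi\colon M\to W_q^{\oplus n}$, where $W_q=\{x\in\mathbb{R}^q:\sum_j x_j=0\}$ carries the coordinate-permutation $G$-action; here $W_q^G=0$ because $G$ is transitive, so $W_q^{\oplus n}$ has no trivial summand. For $i=1,\dots,n-k$ the $i$-th block of $\Phi$ is $\bigl(\mu_i(V_1)-\tfrac1q,\dots,\mu_i(V_q)-\tfrac1q\bigr)\in W_q$ (it sums to zero as $\sum_j\mu_i(V_j)=1$). For $i=1,\dots,k$ the $(n-k+i)$-th block is $\bigl(h_i(c(V_1))-\bar h_i,\dots,h_i(c(V_q))-\bar h_i\bigr)\in W_q$, where $h_i$ is the $i$-th coordinate of $h$ and $\bar h_i=\tfrac1q\sum_j h_i(c(V_j))$. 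Relabelling the $f_j$ permutes the $V_j$ in the same way and $c(\cdot)$ is equivariant by $q$-admissibility, so $\Phi$ is $G$-equivariant; and $\Phi(F)=0$ is exactly the statement that $P(F)$ equipartitions every $\mu_i$ and that $h(c(V_1))=\dots=h(c(V_q))$.

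A technical point is that $c(\cdot)$, hence the last $k$ blocks of $\Phi$, is defined only on the subset $M^\circ\subset M$ of configurations all of whose cells have nonempty interior. The saving observation is that degenerate configurations are harmless: if $V_j$ has empty interior then $V_j\setminus\inte V_j\subset\bigcup_l e(f_j,f_l)$ is null, so $\mu(V_j)=0$ and hence $\mu_i(V_j)=0$ for all $i$; as $n-k\ge1$, the first measure block of $\Phi$ is then nonzero. So one may define the last $k$ blocks by the formula above damped by a $G$-invariant cutoff supported in $M^\circ$ and equal to $1$ wherever the measure blocks can possibly vanish, obtaining a continuous $G$-equivariant $\Phi$ on all of $M$ with $\Phi^{-1}(0)\subset M^\circ$. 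Making all of this precise — continuity of the measure blocks from the measure-separating hypothesis, continuity and equivariance of $c(\cdot)$ on $M^\circ$, and the cutoff construction — is the actual work, and is exactly the continuity discussion the paper defers to the merged version.

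Finally, I would invoke Lemma~\ref{eu-nz}: because $q$ is a prime power and $\dim L=n+1$, the mod-$p$ Euler class of the vector bundle $M\times_G W_q^{\oplus n}\to M/G$ is nonzero — this is where the prime-power hypothesis enters, and the count of $n=(n+1)-1$ copies of $W_q$ is sharp for an $(n+1)$-dimensional $L$. Nonvanishing of the Euler class forces every $G$-equivariant map $M\to W_q^{\oplus n}$, in particular $\Phi$, to have a zero $F_0$; since $\Phi^{-1}(0)\subset M^\circ$, the set $F_0$ is the required $q$-element subset. The genuine obstacles are the proof of Lemma~\ref{eu-nz} itself (the cohomological computation in the spirit of Vasil'ev) and the continuity bookkeeping around the center function; the rest is the routine equivariant-obstruction dictionary.
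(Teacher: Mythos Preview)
Your proposal is correct and follows essentially the same route as the paper: build an equivariant test map $F_q(L)\to\alpha_q^n$ whose first $n-k$ blocks are the measure deviations and whose last $k$ blocks come from $h\circ c$, note that the latter are defined only where all cells have nonempty interior, extend across the rest, and invoke Lemma~\ref{eu-nz}. The only cosmetic differences are that the paper works throughout with the full $\Sigma_q$-action (your construction is in fact $\Sigma_q$-equivariant, and Lemma~\ref{eu-nz} is stated for $\Sigma_q$, so passing to $(\mathbb Z/p)^r$ is unnecessary and would require a separate justification), and that the paper handles the extension by equivariant Tietze extension from the closed set $Z=\{f_1=\dots=f_{n-k}=0\}$ rather than by your cutoff.
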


\begin{rem}
\label{pow2-rem}
In the case $q=2^\ell$ in all the above theorems, when we want a convex partition, the partition may be chosen to be a binary space partition by hyperplanes. In this case instead of using the configuration space $F_q(L)$ in Lemma~\ref{eu-nz} we may follow~\cite{mem2009}: Take the first measure $\mu_1$ and parameterize the binary equipartitions of $\mu_1$ by the product of spheres $Q_q(\mathbb R^n) = (S^{n-1})^{\times q-1}$ taking the normals to the partitioning  hyperplanes. The space $Q_q(\mathbb R^n)$ is equal to $\widetilde M(n, \ell)$ in the notation of~\cite[Definition~1.1]{hung1990}. Then note that partitioning of the remaining measures (or functions) into equal parts is guaranteed by non-vanishing of the Euler class (see the definitions in Section~\ref{eu-nz-sec}) $e(\alpha_q)^{n-1}$ in the cohomology $H_{\Sigma_q^{(2)}}^{(q-1)(n-1)} (Q_q(\mathbb R^n); \mathbb F_2)$ (the group $\Sigma_q^{(2)}$ is the $2$-Sylow subgroup of the permutation group $\Sigma_q$). The latter fact is an analogue of Lemma~\ref{eu-nz}, but its proof is much simpler: It is sufficient to construct a $\Sigma_q^{(2)}$-equivariant map $Q_q(\mathbb R^n)\to \alpha_q^{n-1}$ (the \emph{test map}) with a unique non-degenerate $\Sigma_q^{(2)}$-orbit of zeros. This test map is actually defined in (\ref{proj-av}--\ref{test-map}), if we assume the standard $\Sigma_q^{(2)}$-equivariant inclusion $Q_q(\mathbb R^n)\subset F_q(\mathbb R^n)$ (as in~\cite[Lemma~1.6]{hung1990}).
\end{rem}

{\bf Acknowledgments.}
The author thanks Arseniy~Akopyan, Boris~Aronov, Pavle Blagojevi\'c, Fred~Cohen, Alfredo~Hubard, Gabriel~Nivasch, and Alexey~Volovikov for discussions, useful remarks, and references.

\section{Proof of Theorem~\ref{gen-spl}}

The set of all ordered $q$-tuples $F\subset L$ (collections of $q$ pairwise distinct functions) is the configuration space $F_q(L)$, it has the natural action of the symmetric group $\Sg_q$. Denote $\alpha_q$ the $(q-1)$-dimensional representation of $\Sg_q$, this is the subspace of vectors in $\mathbb R^q$ with zero coordinate sum with the action of $\Sg_q$ by permuting the coordinates.

For every $i=1,\ldots, n$ and $P(F) = \{V_1,\ldots, V_q\}$ the values 
$$
\mu_i(V_1) - \frac{1}{q}, \ldots, \mu_i(V_q) - \frac{1}{q}
$$
define a map $f_i : F_q(L)\to \alpha_q$, this map is $\Sg_q$-equivariant, and from the absolute continuity and the measure separation property we deduce that the map $f_i$ is continuous. 

To prove the theorem we have to show that the direct sum map 
$$
f=f_1\oplus\dots\oplus f_n : F_q(L) \to \alpha_q^n
$$
maps some configuration $F$ to zero. 

The representation $\alpha_q$ defines a vector bundle $\alpha_q\times_{\Sg_q} E\Sg_q\to B\Sg_q$ with the orientation sheaf $\pm\mathbb Z\times_{\Sg_q} E\Sg_q$ (here $\pm \mathbb Z$ is the $\Sg_q$-module with the permutation sign action). So it makes sense to consider its Euler class in the cohomology $H^{q-1}(\Sg_q; \pm\mathbb Z)$, which in turn has a natural image in every $H_{\Sg_q}^{q-1}(X; \pm\mathbb Z)$ for every $\Sg_q$-space $X$. Now we use the following Lemma~\cite[Lemma~6]{kar2009} (see also Section~\ref{eu-nz-sec}):

\begin{lem}
\label{eu-nz}
The image of $e(\alpha_q)^n$ is nonzero in the cohomology
$$
H_{\Sg_q}^{(q-1)n}(F_q(\mathbb R^{n+1}); (\pm\mathbb Z)^{\otimes n}) = H_{\Sg_q}^{(q-1)n}(F_q(L); (\pm\mathbb Z)^{\otimes n}).
$$
\end{lem}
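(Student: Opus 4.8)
The plan is to reduce the non-vanishing of $e(\alpha_q)^n$ in $H_{\Sg_q}^{(q-1)n}(F_q(\mathbb R^{n+1}); (\pm\mathbb Z)^{\otimes n})$ to a computation about configuration spaces of the line, which is the form in which Vasil'ev-type arguments are cleanest. First I would recall the identification $F_q(L)\simeq F_q(\mathbb R^{n+1})$: any basis of the $(n+1)$-dimensional space $L$ gives a linear homeomorphism $L\cong\mathbb R^{n+1}$ carrying ordered tuples of distinct functions to ordered tuples of distinct points, equivariantly for $\Sg_q$; so the two cohomology groups in the statement are canonically isomorphic and I only need to work with $F_q(\mathbb R^{n+1})$. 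Next, the geometric meaning of the Euler class: $e(\alpha_q)$ is the primary obstruction to a nowhere-zero $\Sg_q$-equivariant section of the bundle associated to $\alpha_q$, equivalently to an equivariant map to $\alpha_q\setminus\{0\}$; an $n$-fold power corresponds to the direct sum $\alpha_q^{\oplus n}$, and its non-vanishing on a space $Y$ is equivalent to the statement that \emph{every} continuous $\Sg_q$-equivariant map $Y\to\alpha_q^{\oplus n}$ has a zero. So I would restate the goal as: every $\Sg_q$-equivariant map $F_q(\mathbb R^{n+1})\to\alpha_q^{\oplus n}$ vanishes somewhere.

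The core of the argument is the "Vasil'ev lemma" on the cohomology of $F_q(\mathbb R^m)$. The key fact is that $H^*_{\Sg_q}(F_q(\mathbb R^m);\mathbb Z)$, with appropriate twisted coefficients, is computed by a spectral sequence (or by the explicit description via the Leray spectral sequence of the one-point compactification / the May–Segal model) whose relevant class in top degree is detected by restriction to the "small" strata indexed by the subgroup lattice. Concretely, the class $e(\alpha_q)^n\in H^{(q-1)n}_{\Sg_q}(F_q(\mathbb R^{n+1});(\pm\mathbb Z)^{\otimes n})$ is the Poincaré–Lefschetz dual of the submanifold where the $q$ points, after projecting to the first $n$ coordinates, become "degenerate" in a prescribed way — ultimately pulling back to the diagonal situation. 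The cleanest route is: (i) reduce mod a prime $p$ with $q=p^\ell$, since for cohomology of $\Sg_q$ with these coefficients the $p$-primary part carries all the information and the $p$-Sylow subgroup detects it (a transfer argument); (ii) restrict further to the elementary abelian $p$-subgroup $(\mathbb Z/p)^\ell\subset\Sg_q$ acting on $\mathbb R^{n+1}=\mathbb R^{q}$-ish coordinates by the regular representation, where $F_q(\mathbb R^{n+1})$ restricted becomes a product/iterated fibration of spheres; (iii) on that elementary abelian piece compute $e$ of the (restricted) $\alpha_q$ explicitly in terms of the polynomial generators of $H^*((\mathbb Z/p)^\ell;\mathbb F_p)$, and check the $n$-th power is a nonzero monomial. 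The dimension count $(q-1)n \le (\text{top nonzero degree of } F_q(\mathbb R^{n+1}))$ is exactly tight because $F_q(\mathbb R^{n+1})$ has the $\Sg_q$-equivariant cohomological dimension $(q-1)n$ in the twisted coefficients — this is the reason $\mathbb R^{n+1}$ (and not $\mathbb R^n$) appears, and it is what makes the top power survive rather than die.

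The main obstacle I expect is step (iii): making the restriction computation to $(\mathbb Z/p)^\ell$ genuinely rigorous, i.e. identifying the restricted bundle $\alpha_q|_{(\mathbb Z/p)^\ell}$ as a sum of one-dimensional (for $p=2$) or two-dimensional (for odd $p$) representations, writing down its Euler class as a product of the corresponding degree-one (resp. degree-two) generators, and then verifying that this product, raised to the $n$-th power, is nonzero in $H^*(F_q(\mathbb R^{n+1})|_{(\mathbb Z/p)^\ell};\mathbb F_p)$ — which requires knowing that the latter ring, in the relevant top degree, is a free module over $H^*((\mathbb Z/p)^\ell)$ detecting exactly this monomial. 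This is precisely the content of the cited Vasil'ev-type lemma from~\cite{kar2009}, so in the paper I would invoke that computation rather than redo it; the honest work here is checking the coefficient bookkeeping (the $(\pm\mathbb Z)^{\otimes n}$ orientation twist matches the sign action coming from the $n$ independent projections) and the equivariance of the identification $F_q(L)\cong F_q(\mathbb R^{n+1})$. A secondary, more routine point is handling the non-prime-power case, which does not arise here since $q$ is assumed to be a prime power throughout.
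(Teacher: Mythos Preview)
Your proposal takes a genuinely different route from the paper's proof, and it has a real gap.

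The paper's argument is concrete and cellular. It builds an explicit $\Sg_q$-equivariant section of $\alpha_q^{d-1}$ over $F_q(\mathbb R^d)$ (with $d=n+1$) by projecting each configuration onto the last $d-1$ coordinate axes and recording deviations from the mean. The zero set of this section is the $\Sg_q$-orbit of a single Fuks cell $Z$ (configurations lying on a line parallel to the first axis), which is the unique minimal-dimensional cell of the Fuks decomposition of $B_q(\mathbb R^d)$. Poincar\'e--Lefschetz duality with twisted coefficients turns the nonvanishing of $e(\alpha_q)^{d-1}$ into the assertion that $\pi(Z)$ is a nonzero class in compact-support homology. The paper then computes the boundary of each adjacent $(d+q)$-cell $\pi(Y_k)$ and finds the coefficient at $\pi(Z)$ to be $\pm\binom{q}{k}$; since $q=p^\ell$, every $\binom{q}{k}$ with $0<k<q$ is divisible by $p$, so $\pi(Z)$ survives mod $p$. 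No subgroup restriction is used for the general prime-power case.

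Your approach via transfer to the $p$-Sylow subgroup and restriction to an elementary abelian $(\mathbb Z/p)^\ell$ is a legitimate alternative strategy, and the paper itself notes that it is known to work for $q=p$ (via \cite{ct1991} or \cite[Lemma~5]{kar2009}) and for $q=2^\ell$ (via \cite{hung1990}). But you do not carry out the decisive step: one must show that $e(\alpha_q)^n$ is nonzero in $H^{(q-1)n}_{(\mathbb Z/p)^\ell}(F_q(\mathbb R^{n+1});\mathbb F_p)$, not merely in the group cohomology of a point, and this is precisely where the difficulty sits (the action is free, so the equivariant cohomology is that of an open $qd$-manifold and the class lives in the top possible degree). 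You defer this to ``the cited Vasil'ev-type lemma from~\cite{kar2009}'', but Lemma~\ref{eu-nz} \emph{is} \cite[Lemma~6]{kar2009}, so the appeal is circular. The parenthetical ``$\mathbb R^{n+1}=\mathbb R^q$-ish'' also signals a conflation of the ambient dimension $n+1$ with the number of points $q$; these are unrelated.

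A smaller point: your claim that nonvanishing of $e(\alpha_q)^n$ is \emph{equivalent} to every $\Sg_q$-equivariant map $F_q(\mathbb R^{n+1})\to\alpha_q^{\oplus n}$ having a zero is incorrect; only the forward implication holds in general. Since you then revert to arguing about the Euler class directly, this does not break your outline, but the restatement is misleading.
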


\begin{rem}
In the above lemma we may reduce the cohomology coefficients mod $p$, where $q=p^\ell$.
\end{rem}

The nonzero image of $e(\alpha_q)^n$ is naturally interpreted as the nonzero Euler class of the $\Sg_q$-equivariant vector bundle 
$$
\eta : \alpha_q^n\times F_q(L) \to F_q(L),
$$ 
the map $f$ can be interpreted as a section of $\eta$, so it must have a zero.

\section{Proof of Lemma~\ref{eu-nz}}
\label{eu-nz-sec}

Answering the remarks from Pavle~Blagojevi\'c (private communication) and the unknown referee, we provide a proof of Lemma~\ref{eu-nz}. Put $d=n+1$ in this section.

In fact, most important cases of this lemma were previously known. For $q=p$ (i. e. a prime number) this lemma is valid even in $\mathbb Z_p$-equivariant cohomology (if we embed $\mathbb Z_p\subset \Sg_p$ in the natural way). This is a particular case of~\cite[Lemma~5]{kar2009}, and seems to be known much before, see~\cite[Theorem~3.4, Corollaries~3.5 and 3.6]{ct1991} for example. The case $q=2^\ell$ follows from the direct computations in~\cite{hung1990}, reproduced implicitly in~\cite{mem2009} (see also Remark~\ref{pow2-rem}), in this case it holds in $\Sigma_q^{(2)}$-equivariant cohomology mod $2$.

Lemma~\ref{eu-nz} for $d=2$ was actually proved in~\cite{vass1988}. Below we reproduce the proof extended to all $d\ge 2$. We denote $F_q(\mathbb R^d)/\Sg_q$ by $B_q(\mathbb R^d)$ and denote the natural projection $F_q(\mathbb R^d)\to B_q(\mathbb R^d)$ by $\pi$. While this paper was under review, another proof of this lemma (also following~\cite{vass1988}) appeared in the second version of~\cite{arhu2010}. So the reader may find more details and explanations  in~\cite{arhu2010}.

Let us introduce one important construction: by projecting the configuration of $q$ points $F=\{p_1, \ldots, p_q\}$ in $\mathbb R^d$ onto the coordinate axes $x_2, \ldots, x_d$ we obtain the average for every $j=2,\ldots, q$
\begin{equation}
\label{proj-av}
m_j(F) = \frac{1}{q}(x_j(p_1) + \dots + x_j(p_q))
\end{equation}
and $q$ numbers
\begin{equation}
\label{proj-dev}
x_j(p_1) - m_j(F), \ldots, x_j(p_q) - m_j(F),
\end{equation}
which constitute a $\Sg_q$-equivariant map $h_j : F_q(\mathbb R^d)\to \alpha_q$. Totally these maps constitute a map
\begin{equation}
\label{test-map}
\tilde h : F_q(\mathbb R^d) \to \alpha_q^{d-1}
\end{equation}
with the zero set $\tilde Z$ consisting of configurations $F=\{p_1, \ldots, p_q\}$ such that $x_j(p_i)$ does not depend on $i$ for $j \ge 2$. In other words, the set $\tilde Z$ consists of configurations with all points lying on a single line parallel to the first coordinate axis.

Now let us remind the notion of the Fuks cellular partition of $F_q(\mathbb R^d)$~\cite{fuks1970}. Consider an oriented graded tree $T$ of height $d$ (levels are numbered from $1$ to $d+1$ from leaves to the root) with $q$ leaves labeled by numbers $i=1,\ldots, q$. If a vertex $v$ on level $j+1$ of this tree has children $w_1, \ldots, w_k$ (in this order) on level $j$ than for every pair of children $w_a, w_b$ with $a<b$ and every labels $i_a$ and $i_b$ on a descendant of $w_a$ and a descendant of $w_b$ respectively we impose the inequality $x_j(p_{i_a}) <x_j(p_{i_b})$ on the coordinates of the configuration $F$. Note that these inequalities together guarantee that the points $p_i$ are pairwise distinct and so to any tree $T$ we associate an open cell $Z_T\subset F_q(\mathbb R^d)$. If we remove labels on the bottom level then we obtain an open cell $\pi(Z_T)$ of $B_q(\mathbb R^d)$. It is easy to note that the dimension of $Z_T$ equals the number of vertices in $T$ minus $1$.

The set $\tilde Z$ described above corresponds to the $\Sg_q$-orbit of the Fuks cell $Z$ corresponding to the tree $T_Z$ with only one branching at level $2$ and labels on the bottom level consistent with the left-right direction. In other words, the set $F=\{p_1, \ldots, p_q\}$ with
$$
p_i = (x_i^1, \ldots, x_i^d)
$$ 
is in $Z$ if and only if 
$$
x_1^j = x_2^j = \dots = x_q^j
$$
for every $j=2,\ldots, d$ and 
$$
x_1^1 < x_2^1 < \dots < x_q^1.
$$
Note that the cell $\pi(Z)$ is the unique open cell of minimal dimension $d + q - 1$ of the Fuks partition of $B_q(\mathbb R^d)$.

In order to prove that $e(\alpha_q)^{d-1}$ is nonzero we have to prove that the homology class of $\pi(Z)$ is nontrivial in the compact support homology $H_{d+q-1}^c (B_q(\mathbb R^d); \pm\mathbb Z)$. We must use the compact support homology (homology of the one-point compactification) because the manifolds $F_q(\mathbb R^d)$ and $B_q(\mathbb R^d)$ are open and the Poincar\'e--Lefschetz duality takes cohomology to the compact support homology. We always need twisted coefficients because for even $d$ the manifold $B_q(\mathbb R^d)$ is oriented and $e(\alpha_q)^{d-1}$ is in the cohomology with twisted coefficients, while for odd $d$ the orientation sheaf of $B_q(\mathbb R^d)$ is $\pm \mathbb Z$ and $e(\alpha_q)^{d-1}$ is in the untwisted cohomology. See~\cite{bm1960} for the discussion of the compact support homology and the corresponding Poincar\'e duality with twisted coefficients (in a very general setting compared to what we need here).

We have to check that $\pi(Z)$ is not annihilated by the boundary map. The cells of the Fuks partition of dimension $d+q$ correspond to the trees $T_1, \ldots, T_{q-1}$ such that $T_k$ has a binary branching $v\to w_a, w_b$ on level $3$, then $w_a$ has $k$ children on level $1$ and $w_b$ has $q-k$ children on level $1$. Each tree $T_k$ corresponds to the $\Sg_q$-orbit of the cell $Y_k$ given by the (in)equalities:
$$
x_1^j = x_2^j = \dots = x_q^j
$$
for $j = 3,\ldots, d$,
$$
y_a = x_1^2 = x_2^2 = \dots = x_k^2 < x_{k+1}^2 = \dots = x_q^2 = y_b,
$$
$$
x_1^1 < \dots < x_k^1 \quad\text{and}\quad x_{k+1}^1 < \dots < x_q^1.
$$

Now it remains to calculate the coefficient at $\pi(Z)$ in $\partial \pi(Y_k)$ (with appropriate coefficient twist). In~\cite{vass1988} this coefficient was shown to be $\binom{q}{k}$ up to sign for $d=2$. The calculations in~\cite{vass1988} are actually applicable to the case $d > 2$ because the coordinates $j=3,\ldots, d$ are the same for all points in $Y_k$ and $Z$ and do not affect anything. Since the proof of~\cite[Theorem~2.5.1]{vass1988} is very brief and not very clear we present the calculations below.

For any $\rho\in \Sg_q$ in order to make correct calculation we have to orient $\rho Z$ so that the map $\rho : Z\to \rho Z$ preserves the orientation. Note that this orientation coincides with the orientation given by the form $dx_*^d\wedge \dots \wedge dx_*^2\wedge dx_1^1\wedge \dots \wedge dx_q^1$ ($x_*^j$ denotes the common value of $x_i^j$) if and only if $\rho$ is an even permutation.

Let us orient $Y_k$ by the form $dx_*^d\wedge \dots \wedge dx_*^3\wedge dy_a\wedge dy_b \wedge dx_1^1\wedge \dots \wedge dx_q^1$. The boundary $\partial Y_k$ corresponds to approaching the equality $x_*^2 = y_a = y_b$ from $y_a < y_b$ and it is therefore oriented by $dx_*^d\wedge \dots \wedge dx_*^3\wedge dx_*^2 \wedge dx_1^1\wedge \dots \wedge dx_q^1$. Denoting by $(-1)^\sigma$ the sign of a permutation $\sigma$ we obtain:
$$
\partial Y_k = \sum_{\sigma\in M_{k, q-k}} (-1)^\sigma \sigma Z,
$$
where the subset $M_{k, q-k}\subset \Sg_q$ consists of permutations $\sigma$ such that
$$
\sigma(1) < \dots < \sigma(k)\quad\text{and}\quad \sigma(k+1) < \dots < \sigma(q).
$$
Note that $|M_{k, q-k}|=\binom{q}{k}$. For the homology with twisted coefficients we have to calculate:
\begin{equation}
\label{diff-coeff}
\partial \sum_{\tau\in \Sg_q} (-1)^\tau \tau Y_k = \sum_{\tau\in \Sg_q, \sigma\in M_{k, q-k}}  (-1)^\tau (-1)^\sigma \tau \sigma Z = \binom{q}{k} \sum_{\rho =\tau\sigma\in\Sg_q} (-1)^\rho \rho Z.
\end{equation}
Since $q$ is a power of a prime $p$ it follows that we have the congruence of polynomials in $t$:
$$
(1+t)^q \equiv 1 + t^q \pmod p
$$
and therefore all the binomial coefficients $\binom{q}{k}$ are divisible by $p$. Hence all the coefficients of the boundary operator at $\sum_{\rho\in \Sg_q} (-1)^\rho \rho Z$ are divisible by $p$ and $\pi(Z)$ (which is actually the equivariant cycle $\sum_{\rho\in \Sg_q} (-1)^\rho \rho Z$ in the homology with twisted coefficients) does represent a nonzero homology mod $p$.

\begin{rem}
Note the important thing: In (\ref{diff-coeff}) if we used the untwisted $\mathbb Z$ coefficients and the corresponding cycle $\sum_{\tau\in \Sg_q} \tau Y_k$ without signs then the resulting expression would be different and not divisible by $\binom{q}{k}$.
\end{rem}

\begin{rem}
As is was noted by the referee, it is hard to find a reasonable reference for the Poincar\'e duality in the case of twisted coefficients. In order to overcome this issue we provide the following shortcut:

\begin{itemize}
\item
Put for brevity $M = F_q(\mathbb R^d)$, $\xi : \alpha_q^{d-1}\times M\to M$, and $G=\Sigma_q$. So $\xi$ is a $G$-equivariant vector bundle over the open manifold $M$. The action of $G$ on $M$ changes the orientation (by the sign representation of $G$) if and only if $d$ is odd; it changes the orientation of $\xi$ if and only if $d$ is even.

\item
For a generic $G$-equivariant section $s$ of $\xi$ the zero set $Z_s$ is an embedded submanifold of $M,$ not necessarily compact. From the orientation of $\xi$ (which is the same as the orientation of the normal bundle of $Z_s$) we deduce that the action of $G$ on the orientation of $Z_s$ is the sign representation of $G$.

\item
The standard transversality argument shows that for two generic $G$-equivariant sections $s$ ant $t$ of $\xi$ their zero manifolds $Z_s$ and $Z_t$ are bordant in $[0, 1]\times M$; the bordism is not necessarily compact. This bordism is $G$-equivariant and the orientation is in accordance with the sign action of $G$.

\item
The above facts imply that the class of $Z_s$ (denoted by $[Z_s]$) in the $G$-equivariant compact support homology of $M$ with twisted coefficients $\pm \mathbb Z$ is defined independently of the choice of a generic section $s$.

\item
Now it remains to verify that the homology class $[Z_s]$ does not vanish for a particular section, which was already done (see above) by identifying $[Z_s]$ with the orbit of the lowest-dimensional cell in the Fuks cellular decomposition and providing a direct calculation of the differentials in that decomposition.

\end{itemize}
\end{rem}

\section{Proof of Theorems~\ref{func-spl} and \ref{func-spl-sph}}

The proof follows the proof of Theorem~\ref{gen-spl}, but with the following modifications. Let $L$ be the $(n+1)$-dimensional space of (non-homogeneous) linear functions on $\mathbb R^n$, or the space of homogeneous linear functions on $\mathbb R^{n+1}$ restricted to $S^n$.

For $i=1, \ldots, n-1$, we define the maps 
$$
f_i : F_q(L) \to \alpha_q
$$
as follows. For $F\in F_q(L)$ and $P(F) = \{V_1,\ldots, V_q\}$ put 
$$
m_i(F) = \frac{1}{q} \sum_{j=1}^q \phi_i(V_j(F)),
$$
and
$$
f_i : F \mapsto \big(\phi_i(V_1(F)), \ldots, \phi_i(V_q(F))\big) - \big(m_i(F), \ldots, m_i(F)\big).
$$
Define the map $f_n$ as before
$$
f_n : F\mapsto \left(\mu(V_1) - \frac{1}{q}\mu(C), \ldots, \mu(V_q) - \frac{1}{q}\mu(C)\right).
$$

Note that the maps $f_1,\ldots, f_{n-1}$ are defined only for $F$ such that all the sets $\{V_j(F)\}_{j=1}^q$ (we assume $V_j(F) = V_j(F)\cap C$) are nonempty. Moreover, these maps may be discontinuous. To correct this, consider the closed subset $Z\subseteq F_q(L)$ consisting of configurations $F$ such that $f_n(F)=0$. For $F\in Z$ the sets $V_j(F)$ have equal measures, and therefore they are convex compact sets with nonempty interiors (convex bodies), and they depend continuously (in the Hausdorff metric) on $F$, because their facets depend continuously on $F$. Now assume that the maps $f_1, \ldots, f_{n-1} : Z\to \alpha_q$ are defined according to the above formulas; and extend each map $f_i$ ($1\le i \le n-1$)  separately to a continuous $\Sg_q$-equivariant map $f_i : F_q(L)\to \alpha_q$. This can be done because we extend them from a closed subspace and the target space is the Euclidean space.

Now we can use the Euler class and find a common zero of the maps $f_1, \ldots, f_n$, i.e. the zero of 
$$ 
f_1\oplus\dots\oplus f_n: F_q(L)\to (\alpha_q)^n.
$$ 
The condition $f_n(F)=0$ guarantees that $F\in Z$. That is we are in the range where the maps $f_1,\ldots f_{n-1}$ are defined originally and the result follows.

\section{Proof of Theorem~\ref{vol-area-spl}}

Define the maps ($i=1,\ldots, k$)
$$
f_i : F\mapsto \left(\mu_i(V_1) - \frac{1}{q}\mu_i(C), \ldots, \mu_i(V_q) - \frac{1}{q}\mu_i(C)\right).
$$
they are continuous on the whole $F_q(L)$. Again, let $Z\subset F_q(L)$ consist of configurations $F$ such that $f_i(F)=0$ for all $i=1,\ldots, k$. For $F\in Z$ the sets $V_1(F), \ldots, V_q(F)$ are nonempty and have nonempty interior; of course, we assume $V_j(F) = V_j(F)\cap C$.

Now the maps ($i=k+1, \ldots, n$)
$$
f_i : F\mapsto \left(\sigma_{i-k}(V_1\cap\partial C) - \frac{1}{q}\sigma_{i-k}(\partial C), \ldots, \sigma_{i-k}(V_q\cap\partial C) - \frac{1}{q}\sigma_{i-k}(\partial C)\right)
$$
are defined on $Z$. Note that for $F\in Z$ (and in some neighborhood of $U\supset Z$) any two convex sets $V_j(F), V_l(F)$ are separated by a hyperplane $u_j(x) = u_l(x)$; and since $V_j(F)$ and $V_l(F)$ have nonempty interiors this hyperplane is transversal to $\partial C$. Therefore the sets $V_j(F)\cap\partial C$ depend continuously on $F\in U$ and the rest of the proof for a prime power $q$ is similar to the previous proof.

Thus the case when $q$ is a prime power is done. If $q$ is not a prime power, we may iterate partitions in this theorem.

\section{Proof of Theorem~\ref{gromov-gen}}

Again, the proof follows the proof of Theorem~\ref{gen-spl} with certain modifications.

The first $n-k$ maps $f_i : F_q(L)\to \alpha_q$ are given as before, by the measures $\mu_1,\ldots, \mu_{n-k}$ of the parts $P(F)$. The last $k$ maps
$$
f_{n-k+i} : F_q(L) \to \alpha_q
$$
are given as follows: for $F\in F_q(L)$ and $P(F) = \{V_1(F), \ldots, V_q(F)\}$, consider the coordinate function $x_i$ in the target space of $h$, and put
$$
m_i(F) = \frac{1}{q} \sum_{j=1}^q x_i(h(c(V_j(F)))).
$$
Then define 
$$
f_{n-k+i} : F \mapsto \big(x_i(h(c(V_1(F)))), \ldots, x_i(h(c(V_q(F))))\big) - \big(m_i(F), \ldots, m_i(F)\big).
$$
The maps $f_{n-k+1}, \ldots, f_n$ are defined only for $F$ such that all $V_i(F)$ are nonempty. Since the first $n-k$ conditions 
$$
f_1(F) = \dots = f_{n-k}(F) = 0
$$
define a closed subset $Z\subseteq F_q(L)$ and guarantee that all $V_i(F)$ have nonempty interiors, we can extend the maps $f_{n-k+1}, \ldots, f_n$ from $Z$ continuously and $\Sg_q$-equivariantly to the whole $F_q(L)$, and then apply Lemma~\ref{eu-nz} as above.

\section{Measures on the segment and the complexity of the maximum of several functions}
\label{one-dim-sec}

Recall the ``splitting necklace'' theorem in its continuous version.

\begin{thm}[Noga~Alon~\cite{alon1987}]
\label{alon-spl}
Suppose we are given absolutely continuous probability measures $\mu_1,\ldots, \mu_n$ on a segment $[0,1]$. For an integer $r\ge 2$ put $N=n(r-1)+1$. Then $[0,1]$ can be partitioned into $N$ segments $I_1, \ldots, I_N$, the family $\mathcal F = \{I_i\}_{i=1,\ldots, N}$ can be partitioned into $r$ subfamilies $\mathcal F_1,\ldots, \mathcal F_r$ so that for any $i=1,\ldots, n$ and $j=1,\ldots, r$
$$
\mu_i\left(\bigcup \mathcal F_j\right) = \frac{1}{r}.
$$
\end{thm}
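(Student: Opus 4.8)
The plan is to deduce Alon's theorem from the measure-theoretic machinery of Theorem~\ref{gen-spl}, specialized to the line, by choosing an appropriate measure-separating subspace and reading off the necklace structure from the resulting partition. The key observation is that a partition of $[0,1]$ into $N = n(r-1)+1$ segments together with a distribution of these segments into $r$ classes is essentially the combinatorial shadow of an equipartition of $N$ measures on the line into $r$ parts by a generalized Voronoi partition of low complexity.

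First I would set up the reduction. Work in $\mathbb R^1$ and let $L$ be the $2$-dimensional space of affine functions $x\mapsto ax+b$; then $P(F)$ for an $r$-element $F\subset L$ is a partition of $[0,1]$ into at most $r$ intervals (a generalized Voronoi partition on the line is an interval partition, since the regions $V_j$ are defined by finitely many linear inequalities). However, $r$ intervals is not enough — we need $N = n(r-1)+1$ pieces grouped into $r$ bunches. So instead I would apply Theorem~\ref{gen-spl} iteratively, or better, pass to a space of functions that produces the right structure: take the first measure $\mu_1$, partition $[0,1]$ into $r$ equal parts, then inside each part partition $\mu_2$ into $r$ equal parts, and so on — but this grows the number of cuts multiplicatively, which is wrong. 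The correct route, following the spirit of~\cite{alon1987}, is to set up a single equipartition problem: consider the moment-type map that assigns to a partition into $N$ intervals the $n\cdot r$ numbers $\mu_i(\mathcal F_j) - 1/r$, parametrize partitions-with-labeling by a suitable configuration-type space with a $(\mathbb Z_r)^n$ or $\Sigma$-action, and invoke the non-vanishing Euler class argument (Lemma~\ref{eu-nz}, or rather its mod-$p$ refinement for prime-power $r$) to force a zero. For general $r$ one iterates over the prime-power factorization of $r$, exactly as is done for the convex-partition corollaries of Theorem~\ref{gen-spl}.

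The main steps, in order, are: (1) reduce to $r$ a prime power by the standard iteration argument (if $r = r_1 r_2$, first split each $\mu_i$ into $r_1$ equal parts, then split each resulting piece into $r_2$ equal parts, tracking that the total number of intervals and the number of bunches behave as required by $N = n(r-1)+1$ under composition — this is the bookkeeping identity $n(r_1 r_2 - 1) + 1 \le \big(n(r_1-1)+1\big) + n(r_1-1)\cdot\big(\text{stuff}\big)$ that needs checking); (2) for prime power $r = p^\ell$, parametrize the interval partitions with $r$-labelings by an appropriate configuration space carrying a free action of the relevant group, and construct the test map into $(\alpha_r)^{\oplus n}$ recording the defects $\mu_i(\mathcal F_j)$; (3) verify continuity via absolute continuity of the $\mu_i$ (the analogue of the measure-separation argument in the proof of Theorem~\ref{gen-spl}); (4) apply the non-vanishing of the Euler class to obtain a zero, hence a common equipartition; (5) translate the zero back into the segments $I_1,\dots,I_N$ and the subfamilies $\mathcal F_1,\dots,\mathcal F_r$.

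The hard part will be step~(2): identifying the exact configuration space and group so that the dimension count matches and the Euler class is genuinely nonzero. Unlike the clean setup of Theorem~\ref{gen-spl}, where $F_q(L)$ and Lemma~\ref{eu-nz} do all the work, here the number of intervals $N$ exceeds the number $r$ of bunches, so one cannot directly use $F_r(L)$; one needs the configuration space of $N$ ordered points on the line (or rather a quotient recording only the bunch-labels), and the relevant topological input is closer to the original necklace-splitting topology (the Bárány–Shlosman–Szűcs / Alon $\mathbb Z_p$-index argument) than to Lemma~\ref{eu-nz} verbatim. I expect that making the equivariance group and the test map precise — and checking that the iteration in step~(1) respects the linear bound $N = n(r-1)+1$ rather than a multiplicative one — is where essentially all the content lies; the continuity and Euler-class-pullback parts are routine given the earlier sections.
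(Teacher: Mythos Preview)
The paper does not give its own proof of this statement: Theorem~\ref{alon-spl} is quoted from~\cite{alon1987} as a known result, and Section~\ref{one-dim-sec} is devoted precisely to explaining why the natural attempt to deduce it from Theorem~\ref{gen-spl} \emph{fails}. Concretely, the paper takes $L$ to be the polynomials of degree $\le n$ on $[0,1]$, applies Theorem~\ref{gen-spl} to get $q$ functions $f_1,\dots,f_q\in L$ equipartitioning all $\mu_i$, and observes that the parts $V_j$ of $P(F)$ are unions of segments whose total number equals one plus the number of ``switch'' points of the upper envelope $\max_j f_j$. The deduction would go through if this number were always at most $n(q-1)$; that is exactly False Conjecture~\ref{pol-sup}, which the paper notes is refuted (Akopyan for $n=3$, Shor for $n=4$, and the Davenport--Schinzel superlinear bounds for $n\ge 3$ in general).

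Your proposal runs into this same wall and does not get past it. You correctly notice that the $2$-dimensional space of affine functions gives only $r$ intervals, and that naive iteration blows up the count multiplicatively. But your eventual plan in step~(2) --- ``parametrize interval partitions with $r$-labelings by an appropriate configuration space'' and appeal to ``the original necklace-splitting topology (the B\'ar\'any--Shlosman--Sz\H{u}cs / Alon $\mathbb Z_p$-index argument)'' --- is not a reduction to the machinery of this paper at all; it is a pointer back to Alon's own proof. In other words, the proposal never identifies a subspace $L\subset C[0,1]$ or a configuration space for which Lemma~\ref{eu-nz} would yield the bound $N=n(r-1)+1$, and the paper's point is that the obvious candidate (degree-$\le n$ polynomials) provably does not work. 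Your step~(1) bookkeeping is also left unchecked; as you suspected, the additive identity you would need does not fall out of a multiplicative iteration of Theorem~\ref{gen-spl}. So there is a genuine gap: no choice of $L$ and no version of the Euler-class argument in this paper is shown to produce at most $n(r-1)$ cuts, and the paper itself exhibits the obstruction.
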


Let us try to reduce Theorem~\ref{alon-spl} to Theorem~\ref{gen-spl}.

Take $L$ to be the set of polynomials of degree $\le n$ on the segment $[0, 1]$. In this case we obtain $q$ polynomials, the sets of the partition $P(F)$ are unions of several segments, and we have to show that the total number of segments does not exceed $n(q-1)+1$. This would follow from the following claim.

\begin{fcon}
\label{pol-sup}
Suppose $f_1,\ldots, f_q$ are polynomials of degree $\le n$, for $x\in\mathbb R$ denote
$$
g(x) = \max\{f_1(x), \ldots, f_q(x)\}.
$$
Then $g(x)$ has $\le n(q-1)$ points of switching between a pair of $f_i$'s.
\end{fcon}

\begin{rem}
The function $g(x)$ is usually called an \emph{upper envelope} of the set of polynomials.
\end{rem}

The case of non-prime-power $r$ in the splitting necklace theorem would follow from this conjecture by iterating the splittings, as in the original proof of Theorem~\ref{alon-spl}.

This conjecture is obviously true as stated for $n=1$ or $q=2$, the latter case gives Theorem~\ref{alon-spl} in case $r=2^k$ by iterating (this is the same as using the ``ham sandwich'' theorem). The case $n=2$ can also be done ``by hand'', ordering the polynomials by the coefficient at $x^2$ and applying induction. But generally Conjecture~\ref{pol-sup} is false. Arseniy~Akopyan has constructed a counterexample for $n=3,\ q\ge 4$ (private communication). An unpublished result of P.~Shor (cited in~\cite{agsh1995}) shows that for $n=4$ the number of ``switch'' points may grow as $\Omega(q\alpha(q))$, where $\alpha(q)$ is the inverse Ackermann function. In~\cite{agsh1995} this problem was studied in a combinatorial setting. The sequence of ``switches'' between $q$ polynomials may be encoded as a word in $q$ letters with some restrictions depending on the degree $n$, such sequences are called \emph{Davenport--Schinzel sequences}. It is known~\cite{agsh1995} that the maximum length of such a word complies with Conjecture~\ref{pol-sup} for $n=1,2$; but it is asymptotically superlinear in $q$ for any fixed $n\ge 3$.

The following fact is known: Theorem~\ref{alon-spl} is tight and the number $n(r-1)+1$ cannot be made less. As a consequence, we obtain the following fact about analytic functions:

\begin{thm}
Suppose $L\subset C^\omega[0, 1]$ is an $(n+1)$-dimensional space of functions, $q$ is a prime power. Then there exist distinct  $f_1,\ldots, f_q\in L$ such that the upper envelope
$$
g(x) = \max\{f_1(x), \ldots, f_q(x)\}
$$
has at least $n(q-1)$ non-analytic points.
\end{thm}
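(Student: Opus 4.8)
The plan is to derive the statement from Theorem~\ref{gen-spl} together with the (quoted) fact that Theorem~\ref{alon-spl} is tight. Fix the prime power $q$ and set $r=q$ in the tightness statement: there are absolutely continuous probability measures $\mu_1,\dots,\mu_n$ on $[0,1]$ with the property that whenever $[0,1]$ is cut into segments $I_1,\dots,I_M$ and these segments are grouped into $q$ subfamilies each having $\mu_i$-measure $1/q$ for every $i$, one must have $M\ge n(q-1)+1$. These are the measures I would feed into Theorem~\ref{gen-spl}.

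Next I would apply Theorem~\ref{gen-spl} with $X=[0,1]$ (equipped, say, with Lebesgue measure), the given $(n+1)$-dimensional subspace $L\subset C^\omega[0,1]$, and the measures $\mu_1,\dots,\mu_n$. The hypotheses are met: $L$ is measure-separating because two distinct analytic functions on $[0,1]$ coincide only on a finite set, and the $\mu_i$ are absolutely continuous. The theorem then yields distinct $f_1,\dots,f_q\in L$ such that $P(F)=\{V_1,\dots,V_q\}$ splits each $\mu_i$ into $q$ equal parts. Since the $f_i$ are analytic and pairwise distinct, the coincidence set $S=\bigcup_{i\ne j}e(f_i,f_j)$ is finite; on each component of $[0,1]\setminus S$ the index $j$ achieving $g=\max\{f_1,\dots,f_q\}$ is constant, so $[0,1]$ decomposes into finitely many maximal closed segments $J_1,\dots,J_N$ on which $g$ agrees with a single $f_{j_m}$, consecutive segments carrying different indices. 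Grouping $J_1,\dots,J_N$ by the value of $j_m$ gives (up to the finite set $S$, which has $\mu_i$-measure zero) a partition of exactly the type appearing in the tightness statement, hence $N\ge n(q-1)+1$.

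Finally I would check that each of the $N-1$ breakpoints between consecutive $J_m$'s is a non-analytic point of $g$: near such a point $x_0$, with $g=f_a$ to the left, $g=f_b$ to the right, and $a\ne b$, one has locally $g=\max\{f_a,f_b\}=f_b+\max\{f_a-f_b,0\}$, and $f_a-f_b$ is a nonzero real-analytic function that changes sign at $x_0$. A function of the form $\max\{\psi,0\}$ with $\psi$ real-analytic, nonzero, and sign-changing at $x_0$ vanishes identically on a one-sided neighbourhood of $x_0$ and therefore cannot be analytic there, so $g$ is not analytic at $x_0$. Since $g$ is analytic on the interior of each $J_m$, it follows that $g$ has at least $N-1\ge n(q-1)$ non-analytic points, as claimed.

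As for difficulty, the argument is essentially an assembly of two earlier results, so there is no new hard step here. The one genuinely nontrivial ingredient is the tightness of Theorem~\ref{alon-spl}, which is quoted; the remaining items—the continuity and measure-separation bookkeeping needed to invoke Theorem~\ref{gen-spl}, and the elementary fact that breakpoints of an upper envelope of distinct analytic functions are corners—are routine. The only place that calls for mild care is tracking the finite exceptional set $S$ when translating ``$q$-equipartition by $P(F)$'' into the precise hypothesis of the tightness statement, and ensuring that $N$ is counted as the number of maximal monochromatic segments rather than of all components of $[0,1]\setminus S$.
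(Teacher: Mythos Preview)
Your proof is correct and follows essentially the same approach as the paper's: both combine Theorem~\ref{gen-spl} with the tightness of Theorem~\ref{alon-spl}, the paper phrasing it as a contrapositive while you argue directly with the extremal measures. Your write-up is in fact more careful than the paper's one-line argument, spelling out why the breakpoints are genuine non-analytic points and how to pass from the partition $P(F)$ to the segment count in the tightness statement.
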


\begin{proof}
If for every subset $\{f_1,\ldots, f_q\}\subset L$ the number of changes of maximum in $g(x)$ from $f_i(x)$ to $f_j(x)$ (they are exactly non-analytic points) is less than $n(q-1)$, we would prove Theorem~\ref{alon-spl} using Theorem~\ref{gen-spl} with $<n(q-1)+1$ segments. But this is impossible.
\end{proof}


\begin{thebibliography}{99}

\bibitem{agsh1995}
P.K.~Agarwal, M.~Sharir. Davenport--Schinzel sequences and their geometric applications. Cambridge University Press, Cambridge--New York--Melbourne, 1995.

\bibitem{akknrtu2004}
J.~Akiyama, A.~Kaneko, M.~Kano, G.~Nakamura, E.~Rivera-Campo, S.~Tokunaga, J.~Urrutia. Radial perfect partitions of convex sets in the plane. // Discrete and Computational Geometry, Lecture Notes in Computer Science, 1763/2004, 1--13.

\bibitem{alon1987}
N.~Alon. Splitting necklaces. // Advances in Math., 63, 1987, 247--253.

\bibitem{aah1998}
B.~Aronov, F.~Aurenhammer, F.~Hoffmann. Minkowski-type theorems and least-squares clustering. // Algoritmica, 20(1), 1998, 61--76.

\bibitem{arhu2010}
B.~Aronov, A.~Hubard. Convex equipartitions of volume and surface area. // \href{http://arxiv.org/abs/1010.4611}{arXiv:1010.4611}, 2010.

\bibitem{barblsz2010}
I.~B\'ar\'any, P.~Blagojevi\'c, A.~Sz\H{u}cs. Equipartitioning by a convex $3$-fan. // Advances in Mathematics, 223(2), 2010, 579--593. 

\bibitem{barss1981}
I.~B\'ar\'any, S.B.~Shlosman, A.~Sz\H{u}cs. On a topological generalization of a theorem of Tverberg. // J. Lond. Math. Soc., 23, 1981, 158--164.

\bibitem{bm1960}
A.~Borel, J.C.~Moore. Homology theory for locally compact spaces. // Michigan Mathematical Journal, 7, 1960, 137--159.

\bibitem{bor1933}
K.~Borsuk. Drei S\"atze \"uber die $n$-dimensionale euklidische Sph\"are. // Fund. Math., 20, 1933, 177--190.

\bibitem{bro1982}
K.~Brown. Cohomology of groups. Graduate Texts in Mathematics, 87, New York: Springer-Verlag, 1982.

\bibitem{ct1991}
F.R.~Cohen and L.R.~Taylor. On the representation theory associated to the cohomology of configuration spaces. // Proceedings of an International Conference on Algebraic Topology, 4--11 July 1991, Oaxtepec, Contemporary Mathematics 146, 1993, 91--109.

\bibitem{fuks1970}
D.B.~Fuks. The mod $2$ cohomologies of the braid group (In Russian). // Mat. Zametki, 5(2), 1970, 227--231.

\bibitem{gr2003}
M.~Gromov. Isoperimetry of waists and concentration of maps. // GAFA, 13, 2003, 178--215.

\bibitem{hung1990}
Nguy\^ en~H.V.~Hung. The mod $2$ equivariant cohomology algebras of configuration spaces. // Pacific Jour. Math., 143(2), 1990, 251--286.

\bibitem{kaka2002}
A.~Kaneko and M.~Kano. Perfect partitions of convex sets in the plane. // Discrete and Computational Geometry, 28(2), 2002, 211--222.

\bibitem{kar2005}
R.N.~Karasev. Partitions of a polytope and mappings of a point set to facets. // Discrete and Computational Geometry, 34, 2005, 25--45.

\bibitem{kar2009}
R.N.~Karasev. The genus and the category of configuration spaces. // Topology and its Applications, 156(14), 2009, 2406--2415.

\bibitem{kbs2000}
D.~Kirkpatrick, S.~Bespamyatnikh, J.~Snoeyink. Generalizing ham sandwich cuts to equitable subdivisions. // Discrete and Computational Geometry, 24(4), 2000, 605--622.

\bibitem{mm1982} 
B.M.~Mann, R.J.~Milgram. On the Chern classes of the regular representations of some finite groups. // Proc. Edinburgh Math. Soc., 25, 1982, 259--268.

\bibitem{mat2003}
J.~Matou\v{s}ek. Using the Borsuk-Ulam theorem. // Berlin-Heidelberg, Springer, 2003.

\bibitem{mem2009}
Y.~Memarian. On Gromov's waist of the sphere theorem. // \href{http://arxiv.org/abs/0911.3972}{arXiv:0911.3972}, 2009.

\bibitem{nara2008}
R.~Nandakumar, N.~Ramana Rao. `Fair' partitions of polygons -- an introduction. // \href{http://arxiv.org/abs/0812.2241}{arXiv:0812.2241}, 2008.

\bibitem{sob2010}
P.~Sober\'on. Balanced convex partitions of measures in $\mathbb{R}^d$. // \href{http://arxiv.org/abs/1010.6191}{arXiv:1010.6191}, 2010.

\bibitem{ste1945}
H.~Steinhaus. Sur la division des ensembles de l'espaces par les plans et des ensembles plans par les cercles. // Fund. Math., 33, 1945, 245--263.

\bibitem{st1942}
A.H.~Stone, J.W.~Tukey. Generalized 'Sandwich' Theorems. // Duke Math. J., 9, 1942, 356--359.

\bibitem{stwo1985}
W.~Stromquist, D.R.~Woodall. Sets on which several measures agree. // Journal of Mathematical Analysis and Applications, 108(1), 1985, 241--248.

\bibitem{vass1988}
V.A.~Vasil'ev. Braid group cohomologies and algorithm complexity (In Russian). // Funkts. Anal. Prilozh., 22(3), 1988, 15--24; translation in Funct. Anal. Appl., 22(3), 1988, 182--190.

\bibitem{zivvre2001}
S.T.~Vre\'cica, R.T.~\v Zivaljevi\'c. Conical equipartitions of mass distributions. // Discrete and
Computational Geometry, 25, 2001, 335--350.

\end{thebibliography}
\end{document}